\def\NZQ{\mathbb}               
\def\ZZ{{\NZQ Z}}
\def\frk{\mathfrak}               
\def\Phi{{\frk N}}
\def\opn#1#2{\def#1{\operatorname{#2}}} 
\opn\chara{char} 
\opn\length{\ell} 
\opn\pd{pd} 
\opn\rk{rk}
\opn\projdim{proj\,dim} 
\opn\injdim{inj\,dim} 
\opn\rank{rank}
\opn\depth{depth} 
\opn\grade{grade} 
\opn\height{height}
\opn\embdim{emb\,dim} 
\opn\codim{codim}
\opn\Tr{Tr} 
\opn\bigrank{big\,rank}
\opn\superheight{superheight}
\opn\lcm{lcm}
\opn\trdeg{tr\,deg}
\opn\reg{reg} 
\opn\lreg{lreg} 
\opn\ini{in} 
\opn\lpd{lpd}
\opn\size{size}
\opn\mult{mult}
\opn\dist{dist}
\opn\cone{cone}
\opn\lex{lex}
\opn\rev{rev}
\opn\div{div} \opn\Div{Div} \opn\cl{cl} \opn\Cl{Cl}
\opn\Spec{Spec} \opn\Supp{Supp} \opn\supp{supp} \opn\Sing{Sing}
\opn\Ass{Ass} \opn\Min{Min}
\opn\Ann{Ann} \opn\Rad{Rad} \opn\Soc{Soc}
\opn\Syz{Syz} \opn\Im{Im} \opn\Ker{Ker} \opn\Coker{Coker}
\opn\Am{Am} \opn\Hom{Hom} \opn\Tor{Tor} \opn\Ext{Ext}
\opn\End{End} \opn\Aut{Aut} \opn\id{id} \opn\ini{in}
\opn\nat{nat}
\opn\pff{pf}
\opn\Pf{Pf} \opn\GL{GL} \opn\SL{SL} \opn\mod{mod} \opn\ord{ord}
\opn\Gin{Gin}
\opn\Hilb{Hilb}\opn\adeg{adeg}\opn\std{std}\opn\ip{infpt}
\opn\Pol{Pol}
\opn\sat{sat}
\opn\Var{Var}
\opn\Gen{Gen}
\opn\aff{aff} \opn\con{conv} \opn\relint{relint} \opn\st{st}
\opn\lk{lk} \opn\cn{cn} \opn\core{core} \opn\vol{vol}
\opn\link{link} \opn\star{star}
\opn\gr{gr}
\def\pot#1#2{#1[\kern-0.28ex[#2]\kern-0.28ex]}
\opn\dirlim{\underrightarrow{\lim}}
\opn\inivlim{\underleftarrow{\lim}}
\def\Implies{\ifmmode\Longrightarrow \else
        \unskip${}\Longrightarrow{}$\ignorespaces\fi}
\def\implies{\ifmmode\Rightarrow \else
        \unskip${}\Rightarrow{}$\ignorespaces\fi}
\def\iff{\ifmmode\Longleftrightarrow \else
        \unskip${}\Longleftrightarrow{}$\ignorespaces\fi}
\newtheorem{Theorem}{Theorem}[section]
\newtheorem{Remark}[Theorem]{Remark}
\newtheorem{Example}[Theorem]{Example}
\newtheorem{Question}[Theorem]{Question}
\let\epsilon\varepsilon
\let\phi=\varphi
\let\kappa=\varkappa
\def\qed{\ifhmode\textqed\fi
      \ifmmode\ifinner\quad\qedsymbol\else\dispqed\fi\fi}
\def\textqed{\unskip\nobreak\penalty50
       \hskip2em\hbox{}\nobreak\hfil\qedsymbol
       \parfillskip=0pt \finalhyphendemerits=0}
\def\dispqed{\rlap{\qquad\qedsymbol}}
\opn\dis{dis}
\opn\height{height}
\opn\dist{dist}
\def\pnt{{\raise0.5mm\hbox{\large\bf.}}}
\opn\Lex{Lex}
\begin{document}
\title{Lexsegment ideals and their h-polynomials}
\author{Takayuki Hibi and Kazunori Matsuda}
\address{Takayuki Hibi,
Department of Pure and Applied Mathematics,
Graduate School of Information Science and Technology,
Osaka University, Suita, Osaka 565-0871, Japan}
\email{hibi@math.sci.osaka-u.ac.jp}
\address{Kazunori Matsuda,
Kitami Institute of Technology, 
Kitami, Hokkaido 090-8507, Japan}
\email{kaz-matsuda@mail.kitami-it.ac.jp}
\subjclass[2010]{05E40, 13H10}
\keywords{Castelnuovo--Mumford regularity, lexsegment ideal, $h$-polynomial}
\begin{abstract}
Let $S = K[x_1, \ldots, x_n]$ denote the polynomial ring in $n$ variables over a field $K$ with each $\deg x_i = 1$ and $I \subset S$ a homogeneous ideal of $S$ with $\dim S/I = d$.  The Hilbert series of $S/I$ is of the form $h_{S/I}(\lambda)/(1 - \lambda)^d$, where $h_{S/I}(\lambda) = h_0 + h_1\lambda + h_2\lambda^2 + \cdots + h_s\lambda^s$ with $h_s \neq 0$ is the $h$-polynomial of $S/I$.  
Given arbitrary integers $r \geq 1$ and $s \geq 1$, a lexsegment ideal $I$ of $S = K[x_1, \ldots, x_n]$, where $n \leq \max\{r, s\} + 2$, satisfying $\reg(S/I) = r$ and $\deg h_{S/I}(\lambda) = s$ will be constructed.    
\end{abstract}
\maketitle
The study on the regularity and the degree of the $h$-polynomial of a monomial ideal done in \cite{HM} continues in the present paper and an affirmative answer to \cite[Conjecture 0.1]{HM} will be given.

Let $S = K[x_1, \ldots, x_n]$ denote the polynomial ring in $n$ variables over a field $K$ with each $\deg x_i = 1$ and $I \subset S$ a homogeneous ideal of $S$ with $\dim S/I = d$.  The Hilbert function of $S/I = \bigoplus_{n=0}^{\infty} (S/I)_n$ is the numerical function $H(S/I,n) = \dim_K (S/I)_n$ for $n \in \ZZ_{\geq 0}$.  The Hilbert series of $S/I$ is the formal power series $F(S/I, \lambda) = \sum_{n=0}^{\infty} H(S/I,n) \lambda^n \in \ZZ[[\lambda]]$ of $\{H(S/I,n)\}_{n=0}^{\infty}$.  It is known (\cite[Theorem 6.1.3]{hhGTM}) that $F(R/I, \lambda)$ is of the form 
\[
(h_0 + h_1\lambda + h_2\lambda^2 + \cdots + h_s\lambda^s)/(1 - \lambda)^d,
\] 
where each $h_i \in \ZZ$.  We say that 
\[
h_{S/I}(\lambda) = h_0 + h_1\lambda + h_2\lambda^2 + \cdots + h_s\lambda^s
\]
with $h_s \neq 0$ is the {\em $h$-polynomial} of $S/I$.  Let $\reg(S/I)$ denote the ({\em Castelnuovo--Mumford}\,) {\em regularity} \cite[p.~48]{hhGTM} of $S/I$.  In the previous paper \cite{HM}, given arbitrary integers $r$ and $s$ with $r \geq 1$ and $s \geq 1$, a monomial ideal $I$ of $S = K[x_1, \ldots, x_n]$ with $n \gg 0$ for which $\reg(S/I) = r$ and $\deg h_{S/I}(\lambda) = s$ was constructed and it is conjectured  that the desired monomial ideal can be chosen to be a strongly stable ideals (\cite[Conjecture 0.1]{HM}).  The purpose of the present paper is to claim that \cite[Conjecture 0.1]{HM} turns out to be true.  

\begin{Theorem}
\label{main}
Given arbitrary integers $r \geq 1$ and $s \geq 1$, there exists a lexsegment ideal $I$ of $S = K[x_1, \ldots, x_n]$ with $n \leq \max\{r, s\} + 2$ for which $\reg(S/I) = r$ and $\deg h_{S/I}(\lambda) = s$.  
\end{Theorem}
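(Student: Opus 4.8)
I would treat the ranges $s\ge r$ and $s<r$ separately, using a different explicit family of lexsegment ideals in each.

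\emph{The case $s\ge r$.} Take $n=s-r+1$ and $I=x_1(x_1,\dots,x_n)^r\subset S=K[x_1,\dots,x_n]$ (so $I=(x_1^{r+1})$ when $n=1$). In each degree $k$ the component $I_k$ is either $0$ or the set of all degree-$k$ monomials divisible by $x_1$, which is an initial lexsegment, so $I$ is a lexsegment (in particular strongly stable) ideal, and it is generated in the single degree $r+1$; by Eliahou--Kervaire its resolution is $(r+1)$-linear, whence $\reg(S/I)=r$. A monomial lies outside $I$ precisely when its degree is $\le r$ or it is not divisible by $x_1$, which gives $\dim_K(S/I)_k=\binom{n+k-1}{k}$ for $k\le r$ and $\dim_K(S/I)_k=\binom{n+k-2}{k}$ for $k\ge r+1$. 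Summing the resulting series yields $F(S/I,\lambda)=(1-\lambda)^{-(n-1)}+\lambda Q(\lambda)$, where $Q(\lambda)=\sum_{j=0}^{r-1}\binom{n+j-1}{j}\lambda^j$ has degree $r-1$ and positive leading coefficient; hence $\dim S/I=n-1$ and $h_{S/I}(\lambda)=1+\lambda Q(\lambda)(1-\lambda)^{n-1}$, which satisfies $h_{S/I}(1)\ne 0$ and $\deg h_{S/I}=n+r-1=s$. Here $n=s-r+1\le s<\max\{r,s\}+2$.

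\emph{The case $s<r$.} Now put $n=r+2$ and let $I\subset S=K[x_1,\dots,x_n]$ be the lexsegment ideal determined by the Hilbert function $\dim_K(S/I)_k=\binom{n+k-1}{k}$ for $0\le k\le s-1$ and $\dim_K(S/I)_k=r+1$ for $k\ge s$. One first checks this is an $O$-sequence — the only nontrivial point being that $r+1$ is attainable in degree $s$, i.e.\ $r+1\le\binom{n+s-1}{s}$, which holds — so by Macaulay such a lexsegment ideal $I$ exists. The combinatorial heart of the argument is the observation that for every $k\ge r$ the $r+1$ lexicographically smallest monomials of degree $k$ are exactly $x_{n-1}^{j}x_n^{k-j}$, $0\le j\le r$: any other degree-$k$ monomial is lex-larger than $x_{n-1}^{r}x_n^{k-r}$, since it either involves some $x_i$ with $i\le n-2$ or is $x_{n-1}^{j}x_n^{k-j}$ with $j>r$. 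Consequently $I$ coincides in all degrees $\ge r$ with $J=(x_1,\dots,x_{n-2},x_{n-1}^{r+1})$, so for $k\ge r+2$ one has $I_k=J_k=S_1J_{k-1}=S_1I_{k-1}$ (as $J$ is generated in degrees $\le r+1$), while $x_{n-1}^{r+1}$ is a minimal generator of $I$ of degree $r+1$ (because $x_{n-1}^{r}\notin I_r=J_r$). Thus $r+1$ is the top generating degree of the strongly stable ideal $I$, so $\reg(S/I)=r$. Finally $F(S/I,\lambda)=\sum_{k=0}^{s-1}\binom{n+k-1}{k}\lambda^k+(r+1)\lambda^s/(1-\lambda)$, so $\dim S/I=1$ and $h_{S/I}(\lambda)=(1-\lambda)\sum_{k=0}^{s-1}\binom{n+k-1}{k}\lambda^k+(r+1)\lambda^s$; this has $h_{S/I}(1)=r+1\ne0$, and its coefficient of $\lambda^s$ equals $(r+1)-\binom{r+s}{s-1}$, which is nonzero (it is $r$ when $s=1$, and $\binom{r+s}{s-1}\ge r+s>r+1$ when $s\ge2$). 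Therefore $\deg h_{S/I}=s$, with $n=r+2=\max\{r,s\}+2$.

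\emph{Main obstacle.} The Hilbert-series bookkeeping and the $O$-sequence check are routine. The one point requiring care — the crux of the case $s<r$ — is pinning down enough of the minimal generators of the lexsegment ideal attached to the prescribed Hilbert function to conclude that its top generating degree is exactly $r+1$; this rests on the elementary description of the lex-smallest monomials in each degree recorded above, together with the standard fact (Eliahou--Kervaire) that a strongly stable ideal has Castelnuovo--Mumford regularity equal to its largest minimal generator degree.
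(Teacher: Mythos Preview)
Your proof is correct and follows the same two-case strategy as the paper, working in the same polynomial rings ($n=s-r+1$ when $s\ge r$, $n=r+2$ when $s<r$) and invoking the same key tools (Eliahou--Kervaire for the regularity of a strongly stable ideal, Macaulay's theorem for the existence of a lexsegment ideal with prescribed Hilbert function). The specific ideals, however, differ in both cases. For $s\ge r$ the paper takes $I=x_1^{\,r}(x_1,\dots,x_n)$ rather than your $x_1(x_1,\dots,x_n)^r$; both are lexsegment ideals generated purely in degree $r+1$, so the arguments run in parallel. For $s<r$ the paper prescribes the Hilbert function $1,\,r+2,\dots,r+2,\,r+1,r+1,\dots$ (constant $r+2$ on degrees $1,\dots,s-1$) instead of your ``full up to degree $s-1$'' sequence, and it pins down the top generating degree via the Macaulay expansion of $r+1$ (locating the unique $j$ with $a_j^{\langle j\rangle}>a_{j+1}$) rather than by your explicit identification of $I$ with $(x_1,\dots,x_{n-2},x_{n-1}^{r+1})$ in degrees $\ge r$. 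Your route here is more hands-on and sidesteps the binomial-representation bookkeeping; the paper's piecewise-constant Hilbert function, in turn, makes the final Hilbert-series computation a little cleaner, collapsing to $(1+(r+1)\lambda-\lambda^s)/(1-\lambda)$.
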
 

\begin{proof} {\bf (First Step)}  Let $1 \leq r \leq s$.  Let $I$ be the lexsegment ideal \cite[p.~103]{hhGTM} generated by the monomials 
\[
x_1^{r+1}, \, \, x_1^{r}x_2, \, \, x_1^{r}x_3, \ldots, \, \, x_1^{r}x_{s-r+1}
\]
of the polynomial ring $S = K[x_1, \ldots, x_{s-r+1}]$.  Eliahou--Kervaire formula \cite[Corollary 7.2.3]{hhGTM} says that $I$ has a linear resolution and $\reg(S/I) = \reg(I) - 1 = r$.
Furthermore, since $I = x_1^{r}(x_1, x_2, \ldots, x_{s-r+1})$, one has
\begin{eqnarray*}
F(S/I, \lambda) & = & \frac{1}{(1 - \lambda)^{s-r+1}} - \left(\frac{\lambda^{r}}{(1 - \lambda)^{s-r+1}} - \lambda^r \right) \\
& = &
\frac{1 + \lambda + \cdots + \lambda^{r-1} + \lambda^r(1 - \lambda)^{s-r}}{(1 - \lambda)^{s-r}}.
\end{eqnarray*}
Hence $\deg h_{S/I}(\lambda) = s$, as desired.

\medskip

{\bf (Second Step)}  Let $1 \leq s < r$.  We introduce the sequence $\{a_n\}_{n=0}^{\infty}$ which is
\[
a_0 = 1, \, a_1 = \cdots = a_{s-1} = r + 2, \, a_s = \cdots = a_r = \cdots = r + 1. 
\]
Since the sequence $\{a_n\}_{n=0}^{\infty}$ satisfy the Macaulay condition \cite[Theorem 6.3.8]{hhGTM}, it follows that there exists a homogeneous ideal $I \subset S = K[x_1, \ldots, x_{r+2}]$ for which $H(S/I, n) = a_n$ for $n \in \ZZ_{\geq 0}$.  Let $I^{\rm lex} \subset S$ denote the lexsegment ideal for which the Hilbert function of $I^{\rm lex}$ coincides with that of $I$ (\cite[Theorem 6.3.1]{hhGTM}).  Now, if $j \geq r + 1$, then $j - r + 1 \geq 1$ and the Macaulay expansion (\cite[Lemma 6.3.4]{hhGTM}) of $h_j$ is
\[
h_j = r + 1 = {j \choose j} + {j-1 \choose j-1} + \cdots + {j - r + 1 \choose j - r + 1}.
\] 
Hence 
\[
h_{j}^{\langle j \rangle} = {j + 1 \choose j + 1} + {j \choose j} + \cdots + {j - r + 2 \choose j - r + 2} = r + 1 = h_{j+1}.  
\]
Furthermore, the Macaulay expansion of $h_r$ is
\[
h_r = r + 1 = { r + 1 \choose r}.
\] 
Hence
\[
h_{r}^{\langle r \rangle} = { r + 2 \choose r + 1} = r + 2 > h_{r+1}.
\]
It then follows from the proof of \cite[Theorem 6.3.1]{hhGTM} that the maximal degree of the monomials belonging to the minimal system of generators of $I^{\rm lex}$ is $r + 1$.  Again, Eliahou--Kervaire formula \cite[Corollary 7.2.3]{hhGTM} says that $\reg(R/I) = \reg(I) - 1 = (r + 1) - 1 = r$.  On the other hand,

\begin{eqnarray*}
F(S/I, \lambda) & = & 1 + (r + 2)(\lambda + \lambda^2 + \cdots + \lambda^{s-1}) + \frac{(r + 1) \lambda^{s}}{1 - \lambda} \\
& = & 1 + \frac{(r + 2)\lambda(1 - \lambda^{s-1})}{1 - \lambda} + \frac{(r + 1) \lambda^{s}}{1 - \lambda} \\
& = & \frac{1 + (r + 1)\lambda - \lambda^s}{1 - \lambda}
\end{eqnarray*}
Hence $\deg h_{S/I}(\lambda) = s$, as required.
\end{proof}

\begin{Example}
\label{EX}
{\em  Let $s = 2$ and $r = 4$.  Then the lexsegment ideal $I^{\rm lex} \subset S = K[x_1, \ldots, x_6]$ with the Hilbert function $1, 6, 5, 5, 5, \ldots$ is the monomial ideal generated by  
\begin{center}
$x_{1}^{2}, x_{1}x_{2}, x_{1}x_{3}, x_{1}x_{4}, x_{1}x_{5}, x_{1}x_{6}, x_{2}^{2}, x_{2}x_{3}, x_{2}x_{4}, x_{2}x_{5}, x_{2}x_{6}, $
$x_{3}^{2}, x_{3}x_{4}, x_{3}x_{5}, x_{3}x_{6},  x_{4}^{2}, x_{4}x_{5}^{2}, x_{4}x_{5}x_{6}, x_{4}x_{6}^{3}, x_{5}^{5}. $
\end{center}
One has $\dim S/I^{\rm lex} = 1$ and $\depth S/I^{\rm lex} = 0$.  Its Betti table is

\ 

\centering
\begin{BVerbatim}
1  .  .  .  .  . . 
. 16 47 63 46 18 3 
.  2  9 16 14  6 1 
.  1  5 10 10  5 1 
.  1  4  6  4  1 . 
\end{BVerbatim}

}
\end{Example}

\begin{Question}
\label{Q}
Find all possible sequences $(d,e,r,s) \in \ZZ_{\geq 0}$ with $d \geq e \geq 0, \, r \geq 1, \, s \geq 1$ and $s - r \le d - e$ for which
there exists a homogeneous ideal $I \subset S = K[x_1, \ldots, x_n]$ with $n \gg 0$ satisfying 
\[
\dim S/I = d, \, \, \, \depth S/I = e, \, \, \, \reg(S/I) = r, \, \, \, \deg h_{S/I}(\lambda) = s.
\]
\end{Question}
On the other hand, in general, it is known (\cite[Corollary B.4.1]{V}) that one has 
\[
\deg h_{S/I}(\lambda) - \reg(S/I) \le \dim S/I - \depth S/I
\]
for each homogeneous ideal $I$ of $S = K[x_1, \ldots, x_n]$.

\begin{Remark}
{\em
A few remarks are collected.
\begin{enumerate}
	\item Let $1 \leq s < r$. 
	Let $I^{\rm lex}$ be the lexsegment ideal of $S = K[x_{1}, \ldots, x_{r + 2}]$ 
	which appears in Second Step of the proof of Theorem \ref{main}. 
	Then $\dim S/I^{\rm lex} = 1$, $\depth S/I^{\rm lex} = 0$, 
	$\reg \left(S/I^{\rm lex}\right) = r$ and $\deg h_{S/I^{\rm lex}}(\lambda) = s$. 
	As a result, Question \ref{Q} can be solved, when $d = 1, e= 0$ and $1 \leq s < r$. 
	\item Let $1 \leq s < r$. 
	Let $I = I_{r - s} + (u_{1}u_{2} \cdots u_{s})$ be the monomial ideal of 
	$S$ which appears in the proof of \cite[Theorem 1.2]{HM}, 
	where $$S = K[x,y_{1}, \ldots, y_{r - s}, z_{1}, \ldots, z_{r - s + 1}, u_{1}, u_{2}, \ldots, u_{s}].$$
	Then $\dim S/I = r$, $\depth S/I= r - 1$, 
	$\reg (S/I) = r$ and $\deg h_{S/I}(\lambda) = s$. 
	As a result, Question \ref{Q} can be also solved, when
	$d = r, e = r - 1$ and $1 \leq s < r$. 
	\item Let $I$ be the monomial ideals of $S = K[x_{1}, x_{2}, x_{3}, x_{4}, x_{5}]$ generated by
\begin{eqnarray*}	
	x_{1}^{2}, \, x_{1}x_{2}, \, x_{1}x_{3}, \, x_{1}x_{4}, \, x_{1}x_{5}, \, x_{2}^2, \, x_{2}x_{3}^{2}, \, x_{2}x_{3}x_{4}, \, 
	x_{2}x_{3}x_{5},\\	
	x_{2}x_{4}^{3}, \, x_{2}x_{4}^{2}x_{5}, \, x_{2}x_{4}x_{5}^3, \,
	x_{2}x_{5}^{4}, \, 
	x_{3}^{6}, \, x_{3}^{5}x_{4}, \, x_{3}^{5}x_{5}, \, x_{3}^{4}x_{4}^{3}. \, \, \, \, \, \, \, 
\end{eqnarray*} 
	Then $I$ is a lexsegment ideal and $\dim S/I = 2$, $\depth S/I = 0$, $\reg (S/I) = 6$ and 
	$\deg h_{S/I}(\lambda) = 1$. 
	It would be of interest to find a lexsegment ideal $I$ of $S = K[x_{1}, \ldots, x_{n}]$ 
	with $n \gg 0$ satisfying $\dim S/I - \depth S/I = c$ and $\deg h_{S/I}(\lambda) < \reg (S/I)$ 
	for all $c \geq 0$. 
	\item Let $I$ be a monomial ideal of $S = K[x, y]$. 
	By virtue of \cite[Theorem 2.4]{CH}, one can easily see that 
	\begin{itemize}
		\item $S/I$ is Cohen--Macaulay if and only if $\deg h_{S/I}(\lambda) = \reg (S/I)$;
		\item $S/I$ is not Cohen-Macaulay if and only if $\deg h_{S/I}(\lambda) = \reg (S/I) + 1$.  
	\end{itemize}
	Thus in particular one has $\deg h_{S/I}(\lambda) \geq \reg (S/I)$ for all monomial ideals $I$ of $S = K[x, y]$. 
	\item It might be a reasonable question to find a natural class of monomial ideals $I$ of $S = K[x_1, \ldots, x_n]$ for which (i) $S/I$ is not Cohen--Macaulay, (ii) $I$ does not have a pure resolution and (iii) $\deg h_{S/I}(\lambda) - \reg(S/I) = \dim S/I - \depth S/I$. 	 
\end{enumerate}
}
\end{Remark}

\noindent
{\bf Acknowledgements} 
\, During the participation of the first author in the workshop {\em New Trends in Syzygies}, organized by Jason McCullough (Iowa State University) and Giulio Caviglia (Purdue University), Banff International Research Station for Mathematical Innovation and Discovery, Banff, Canada, June 24 -- 29, 2018, a motive for writing the present paper arose from an informal conversation with Marc Chardin.  Special thanks are due to the BIRS for providing the participants with a wonderful atmosphere for mathematics.   
The first author is partially supported by JSPS KAKENHI 26220701.  The second author is partially supported by JSPS KAKENHI 17K14165.

\end{document}